\newtheorem{proposition}{Proposition}[section]
\newtheorem{theorem}[proposition]{Theorem}
\newtheorem{remark}[proposition]{Remark}
\begin{document}

\newcommand{\EC}{{\sf EC}}
\newcommand{\AS}{{\sf AS}}
\newcommand{\DD}{{\sf D}}

\title{On Abelian subvarieties of bounded degree \\ in a polarized Abelian variety}
\date{}
\author{Lucio Guerra}
\maketitle

\begin{abstract}
If $A$ is an Abelian variety, endowed with a polarization $L$, we study the function $N_A(t)$ which counts the number of Abelian subvarieties $S$ in $A$ such that for the induced polarization $L|_S$ the Euler characteristic  $\chi(L|_S)$ is bounded above by $t$. We give an estimate for the asymptotic order of growth of this function.
\end{abstract}

\section{Introduction}

Let $A$ be an Abelian variety, endowed with a polarization $L$. (We view polarizations as divisor classes, rather than line bundles.) Let $S$ be a subvariety of $A$, of dimension $n$. The intersection number $$S \cdot L^n = (L|_S)^n$$ can be considered as a geometric degree of $S$, at least when $L$ is a very ample divisor class. If $S$ is an Abelian subvariety of $A$, the geometric form of the Riemann Roch theorem for Abelian varieties says that
$$\frac{(L|_S)^n}{n!} = \chi(L|_S)$$
is the Euler characteristic of the line bundle associated to the restricted polarization.
So, for an Abelian subvariety, the Euler characteristic $\chi(L|_S)$ can be considered as a `reduced degree'. Denote with the symbol $\AS_A$ the collection of all Abelian subvarieties of $A$. For every positive integer or real number $t$, define $$N_A(t) := \# \big\{ S \in \AS_A \mbox{ s.t. } \chi(L|_S) \leq t \big\} \smallskip.$$

That this is a finite number was already known to Poincar\'e, at least for the subcollection of subvarieties of dimension $n=1$. The general statement follows from a result of Birkenhake and Lange \cite{BL}, to the effect that the collection of all Abelian subvarieties with bounded exponent in $A$ is finite. In a couple of papers \cite{Gu1},\cite{Gu2} we studied the problem of giving an asymptotic estimate for the finite number of elliptic curves with bounded degree in $A$. The aim of the present work is to show that the method of the previous papers can be adjusted to the general situation. The argument leads to make explicit the relation between the rate of growth of the counting function $N_A(t)$ and the complete decomposition of $A$ up to isogeny into a product of simple Abelian varieties.

Assume that the complete decomposition of $A$ is given by an isogeny
$$A \approx B_1^{k_1} \times \cdots \times B_q^{k_q}$$
where $B_1,\ldots,B_q$ are simple Abelian varieties, each other non-isogenous, 
and where $k_1,\ldots,k_q$ are positive integers. Define the integers: $k$, the maximum among the multiplicities $k_j$, and $h$, the maximum rank of an endomorphism group $End(B_j)$.
In terms of these data, we prove the following
\begin{theorem}
\label{T}
There is an asymptotic estimate
$$N_A(t) = O\left(t^{\,q (kh+2)(k-1)}\right).$$
\end{theorem}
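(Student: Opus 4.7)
The plan is to encode each $S \in \AS_A$ as a primitive symmetric endomorphism of $A$ and then to count these endomorphisms by a lattice-point estimate in $End(A)$. By the theorem of Birkenhake--Lange referenced in the introduction, there is an injection $S \mapsto \phi_S$ from $\AS_A$ into $End(A)$, where $\phi_S$ is symmetric with respect to the Rosati involution induced by $L$, satisfies the quasi-idempotent relation $\phi_S^2 = e(S)\,\phi_S$ with $e(S)$ the exponent of $S$ in $A$, and recovers $S = \phi_S(A)$. It therefore suffices to bound the number of such primitive symmetric $\phi \in End(A)$ whose associated subvariety has $\chi(L|_{\phi(A)}) \leq t$.

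Next I would invoke the complete isogeny decomposition of $A$ to identify
$$End_{\mathbb Q}(A) \;\cong\; \prod_{j=1}^{q} M_{k_j}(D_j),\qquad D_j := End_{\mathbb Q}(B_j),$$
an isomorphism of $\mathbb Q$-algebras with involution, with each $D_j$ a division algebra of dimension $h_j\leq h$ over $\mathbb Q$; correspondingly $End(A)$ is commensurable with $\prod_j M_{k_j}(\mathcal O_j)$ for some orders $\mathcal O_j\subset D_j$. Under this decomposition $\phi_S$ splits componentwise as a tuple $(\phi_S^{(j)})_j$ of symmetric quasi-idempotents in the respective matrix factors, so the problem factorizes across $j$ and it suffices to obtain a bound in each factor and multiply.

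The geometric heart of the argument is to convert the bound $\chi(L|_S)\leq t$ into an effective polynomial bound on $\phi_S$ itself. Using the Riemann--Roch formula displayed in the introduction, the fact that $\phi_S$ acts on $S$ as multiplication by $e(S)$, and the standard comparison between the analytic norm of a norm-endomorphism and the product $\chi(L)\chi(L|_S)$, one derives effective estimates of the form $e(S)=O(t^{c_1})$ and $\|\phi_S^{(j)}\|=O(t^{c_2})$ with explicit $c_1,c_2$. Writing $\phi_S^{(j)}=e(S)\,\pi_S^{(j)}$ with $\pi_S^{(j)}$ a rational symmetric idempotent, the count then reduces to counting lattice points lying on the rank-stratified symmetric-idempotent subvariety of $M_{k_j}(D_j\otimes\mathbb R)$ inside a box of size a power of $t$. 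A bookkeeping of the dimension of this locus, together with the scalar parameter $e$ and the strict-inclusion condition $\phi_S^{(j)}\neq \mathrm{id}_{k_j}$ on at least one factor, yields the per-factor bound $O(t^{(k_jh_j+2)(k_j-1)}) \leq O(t^{(kh+2)(k-1)})$. Taking the product over the $q$ factors gives $N_A(t)=O(t^{q(kh+2)(k-1)})$.

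The main obstacle I foresee is the conversion step of the third paragraph: producing the polynomial comparisons between $\chi(L|_S)$, $e(S)$, and the archimedean norms of the matrix components of $\phi_S$ uniformly over $S \in \AS_A$. This is exactly where the arguments of \cite{Gu1,Gu2} for elliptic-curve subvarieties must be generalized to subvarieties of arbitrary dimension and, simultaneously, to subvarieties that cut across several of the simple isogeny factors of $A$ at once; the final exponent $q(kh+2)(k-1)$ is then a transparent consequence of combining these polynomial comparisons with the lattice-point count on each factor.
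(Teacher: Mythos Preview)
Your proposal follows a genuinely different route from the paper's. The paper never encodes $S$ by its norm-endomorphism $\phi_S$. Instead it first passes, via isogeny, to a split-polarized product $B_1^{k_1}\times\cdots\times B_q^{k_q}$ (so the factorization across $j$ is geometric, not algebraic via $End_{\mathbb Q}(A)$), and then, for a single factor $B^k$, argues by induction on $k$: write $B^k=B\times B^{k-1}$, parametrize the Abelian subvarieties $S$ with surjective first projection by pairs $(F,T)$ with $F\in\AS_{B^{k-1}}$ and $T=C_{a,f}$ the image of $(a,f):B\to B\times B^{k-1}$; an elementary intersection-theoretic inequality then bounds $\chi(L|_T)$ from below by $aM+\frac{1}{a}\cdot\frac{\Theta^{n-1}}{(n-1)!}\cdot f^\ast\Theta'$, and the lattice-point count is performed not on an idempotent variety but on the \emph{ellipsoid} in $Hom(B,B^{k-1})\cong\mathbb Z^{(k-1)h}$ defined by the positive-definite quadratic form $f\mapsto\frac{\Theta^{n-1}}{(n-1)!}\cdot f^\ast\Theta'$. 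The recursion $E_k=E_{k-1}+2((k-1)h+1)$ then telescopes to $(kh+2)(k-1)$.

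Your plan, by contrast, has a real gap, and you have correctly located it yourself. The ``conversion step''---turning $\chi(L|_S)\le t$ into uniform polynomial bounds $e(S)=O(t^{c_1})$ and $\|\phi_S^{(j)}\|=O(t^{c_2})$---is simply not carried out; as written it is an aspiration, not an argument. Equally unsubstantiated is the claim that the lattice-point count on the symmetric quasi-idempotent locus produces exactly the exponent $(k_jh_j+2)(k_j-1)$: the real dimension of the rank-$r$ symmetric-idempotent stratum in $M_{k_j}(D_j\otimes\mathbb R)$ depends on the Albert type of $D_j$, not only on $h_j$, and you have not explained how the integrality constraint on $e(S)\pi_S$ and the range of $e(S)$ combine to give that formula after summation over $r$. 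In the paper the exponent arises transparently from a telescoping recursion; in your sketch the ``bookkeeping'' that would have to replace this is entirely missing. Until both the conversion inequalities and the dimension count are made explicit, the proposal is a strategy rather than a proof.
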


The main tools for the argument are: a thorough use of the complete reducibility theorem in the relative version involving a given polarization (cf. \S \ref{reducibility}, especially Proposition \ref{deg-chi}), and a classical result in Number Theory concerning the number of lattice points in a bounded region of Euclidean space (cf. \S \ref{lattice points}). The proof of Theorem \ref{T} is given in the final \S \ref{finale}.

\section{Some remarks about reducibility}
\label{reducibility}

\subsection{Reducibility with respect to a polarization}
\label{RP}
Let $A$ be an Abelian variety, with a polarization $L$. Let $S$ be an Abelian subvariety of $A$. There is in $A$ an Abelian subvariety $S'$ such that 
\begin{itemize}
\item
[$-$] $S \cap S'$ is a finite set and $S + S' = A$, so that 
the sum map $$s: S \times S' \rightarrow A$$ is an isogeny, and
\item
[$-$] the pullback polarization from $A$ splits as  
$$s^\ast (L) = p^\ast (L|_S) + (p')^\ast (L|_{S'})$$
\end{itemize} 
where $p,p'$ denote the natural projections of  the product $S \times S'$ to the factors $S,S'$, respectively.
The complementary Abelian subvariety $S'$ is uniquely determined. 

\subsection{On the degree of the sum isogeny}
\label{remark}
In the setting of the preceding section, the following holds:
$$\deg(s)\; \chi(L) = \chi(L|_S)\; \chi(L|_{S'}).$$
In fact, one has $\chi(s^\ast(L)) = \deg(s)\, \chi(L)$ (the behavior of the Euler characteristic under an etale finite morphism) and on the other hand $\chi(s^\ast(L)) = \chi(p^\ast (L|_S) + (p')^\ast (L|_{S'})) = \chi(L|_S)\, \chi(L|_{S'})$ (by the K\"unneth formula for the cohomology of shaves). Furthermore, concerning the degree $\deg(s) = \#(S \cap S')$, we point out the following:

\begin{proposition}
\label{deg-chi}
For the degree of the sum isogeny in the reduciblity theorem, one has the upper bound
$$\deg(s) \leq \chi(L|_S)^2.$$
\end{proposition}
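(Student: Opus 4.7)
The plan is to identify $\deg(s) = \#(S \cap S')$ with the cardinality of a subgroup of $\ker(\phi_{L|_S})$, where $\phi_{L|_S}: S \to \hat{S}$ is the polarization isogeny associated to $L|_S$. Since for any polarization one has $\#\ker(\phi_{L|_S}) = \deg(\phi_{L|_S}) = \chi(L|_S)^2$ (Riemann--Roch together with Mumford's index theorem), the bound follows at once from the containment $S \cap S' \subset \ker(\phi_{L|_S})$, so the whole argument reduces to proving this containment.

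To get it, I would first translate the splitting hypothesis $s^\ast(L) = p^\ast(L|_S) + (p')^\ast(L|_{S'})$ into an orthogonality of $S$ and $S'$ with respect to the polarization. By the seesaw principle applied to line bundles on $S \times S'$, the splitting is equivalent to the vanishing of the mixed Poincar\'e-type term pairing $S$ with $S'$, which in turn is equivalent to the identity
$$\hat\iota_S \circ \phi_L \circ \iota_{S'} = 0,$$
where $\iota_S, \iota_{S'}$ are the inclusions of $S, S'$ in $A$ and $\hat\iota_S$ is the dual of $\iota_S$. This step is the only one requiring real care: it is a bookkeeping of dual maps on a product of Abelian varieties, entirely standard but worth spelling out.

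Given a point $x \in S \cap S'$, I would then view it in the two natural ways. Regarded as a point of $S'$, the orthogonality identity above gives $\hat\iota_S \phi_L(x) = 0$. Regarded as a point of $S$, the composition $\hat\iota_S \circ \phi_L \circ \iota_S$ is by definition the polarization isogeny $\phi_{L|_S}$, so $\phi_{L|_S}(x) = 0$. Hence $x \in \ker(\phi_{L|_S})$, and combining with the first paragraph yields $\deg(s) = \#(S \cap S') \leq \#\ker(\phi_{L|_S}) = \chi(L|_S)^2$, as required.
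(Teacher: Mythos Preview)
Your argument is correct, and it takes a genuinely different route from the paper's. The paper proves the bound by first treating the principally polarized case, where a cited result on the types of $L|_S$ and $L|_{S'}$ gives the exact equality $\deg(s)=\chi(L|_S)^2$, and then reduces the general case to this one via an isogeny $g:A\to B$ onto a principally polarized $(B,M)$ with $L=g^\ast M$, tracking how $S,S'$ and their intersection behave under $g$. Your approach is intrinsic: the splitting hypothesis forces the off-diagonal block $\hat\iota_S\circ\phi_L\circ\iota_{S'}$ of the matrix for $\phi_{s^\ast L}$ to vanish, and since for $x\in S\cap S'$ one has $\iota_S(x)=\iota_{S'}(x)$ in $A$, this immediately gives $\phi_{L|_S}(x)=\hat\iota_S\phi_L\iota_S(x)=\hat\iota_S\phi_L\iota_{S'}(x)=0$, hence $S\cap S'\subset\ker\phi_{L|_S}$. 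Your method avoids both the reduction step and the external input on polarization types, and it makes transparent why the bound is $\chi(L|_S)^2$: it is exactly $\#\ker\phi_{L|_S}$. The paper's route, on the other hand, yields the additional information that equality holds when $L$ is principal. One minor remark: for the proof you only use the implication ``splitting $\Rightarrow$ orthogonality'', so the seesaw discussion of the converse is not needed.
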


\begin{proof}
When $L$ is a principal polarization on $A$, the relation between the types of the induced polarizations $L|_S$ and $L|_{S'}$ is completely understood (see \cite{CAV}, 12.1.5, p. 366). If the type for the smaller dimension is $(d_1,\ldots,d_m)$ then for the higher dimension the type is $(1,\ldots,1,d_1,\ldots,d_m)$. It follows that $\chi(L|_S) = \chi(L|_{S'})$, both equal to the product $d_1 \cdots d_m$, and therefore $\deg(s) = \chi(L|_S)^2$ follows from the equation above, with $\chi(L) = 1$.

If $L$ is an arbitrary polarization, there is an isogeny $g: A \rightarrow B$ onto an Abelian variety $B$ having a principal polarization $M$ such that $L = g^\ast(M)$ (see \cite{CAV}, 4.1.2, p. 71). 
If $S$ is an Abelian subvariety of $A$, define $T := g(A)$ and let $d_S$ be the degree of the restriction $g_S: S \rightarrow T$. 
One has $\chi(L|_S) = d_S\, \chi(M|_T)$.

Let $T'$ be the complement of $T$ in $B$ with respect to $M$. Then $S'$, the complement of $S$ in $A$ with respect to $L$, coincides with $g^{-1}_{\, 0}(T')$, the connected component of $0$ in the pre-image $g^{-1}(T')$. In order to check this, it is enough to work with the commutative diagram 
$$\begin{CD}
S \times g^{-1}_{\, 0}(T') @>{s}>> A \\
@V{\overline{g \times g}}VV @VV{g}V \\
T \times T' @>>{t}> B
\end{CD}$$
where $s,t$ are the sum homomorphisms,
and check that the pullback of $M$ 
splits on $S \times g^{-1}_{\, 0}(T')$.
We have just seen that
$\#(T \cap T') = \chi(M|_T)^2$ holds since $M$ is a principal polarization.
Then \smallskip 
$\#(S \cap S') \leq \#(S \cap g^{-1}(T')) \leq d_S\, \#(T \cap T')
\leq {d_S}^2 \chi(M|_T)^2 = \chi(L|_S)^2$.
\end{proof}

Finally, here is an immediate consequence:
$$\chi(L|_{S'}) \leq \chi(L)\, \chi(L|_S).$$

\subsection{Complete reducibility with respect to a polarization}
\label{CRP}
As a consequence of the reducibility theorem quoted in \S \ref{RP} one has the following theorem of complete reducibility into simple factors.

Let $A$ be an arbitrary Abelian variety, endowed with a polarization $L$. There exist:
\begin{itemize}
\item[$-$]
a sequence of simple Abelian varieties $B_1,\ldots,B_q$, each other non-isogenous, 
\item[$-$] a corresponding sequence of positive integers $k_1,\ldots,k_q$, 
\item[$-$] 
and an isogeny $$s: B_1^{k_1} \times \cdots \times B_q^{k_q} \rightarrow A$$such that the pullback polarization $s^\ast(L)$ splits. \smallskip
\end{itemize}
In general we say that a polarization on a product variety $S_1 \times \cdots \times S_k$ is a split polarization if it is of the form
$$p_1^\ast(L_1) + \cdots + p_k^\ast(L_k)$$
where $L_i$ is a polarization on $S_i$ and $p_i$ is the projection to the $i$th factor.

In the theorem of complete reducibility,
the collection of pairs $(B_j,k_j)$ is determined from $A$ alone, up to isogeny, however the isogeny $s$ that is described above also depends on the polarization $L$.

\subsection{Behavior under isogenies}
\label{bui}

Let $A$ and $B$ be Abelian varieties, and let 
$g : B \rightarrow A$ be an isogeny, of degree $d$.
Let $L$ be a polarization on $A$ and consider on $B$ the pullback polarization $g^\ast(L)$. 
There is a one to one correspondence 
$\AS_A \overset{_\sim}{\longrightarrow} \AS_B.$
Given $S \subset A$ the corresponding $S^\ast$ in $B$ is
the connected component of $0$ in the pre-image $\varphi^{-1}(S)$. 
The restricted isogeny $S^\ast \rightarrow S$ has degree $d_S \leq d$
(in fact a divisor of $d$). With respect to the polarizations one has
$\chi((g^\ast L)|_{S^\ast}) = d_S\, \chi(L|_S)$.
So there is the chain of inequalities
$\chi(L|_S) \leq \chi((g^\ast L)|_{S^\ast}) \leq d\, \chi(L|_S)$.
It follows that the functions counting Abelian subvarieties in $A$ and in $B$
are related by the inequalities
$$N_A(t) \leq N_B(dt) {\rm \ \ \ and \ \ \ } N_B(t) \leq  N_A(t)$$
and therefore one has
$$N_A(t) = O(t^e) \mbox{ \; if and only if \; } N_B(t) = O(t^e).$$

\begin{remark} \label{R} \em
Because of the invariance under isogeny of the asymptotic estimate for the counting function, and because of the complete reducibility theorem with respect to a polarization, in order to prove Theorem \ref{T} it is enough to confine oneself to the case of a product of simple Abelian varieties endowed with a split polarization. This is what we do in the following.
\end{remark}

\section{Bounding homomorphisms}

\subsection{A quadratic form}
Let $A,A'$ be Abelian varities, of dimensions $n,n'$, endowed with polarizations $\Theta,\Theta'$. Consider the function
$$Hom(A,A') \longrightarrow \mathbb Z$$
which is defined by the assignment
$$f \longmapsto \dfrac{\Theta^{n-1}}{(n-1)!} \cdot f^\ast\Theta'.$$ 
It is known that this is
a quadratic form,
always non-negative.
positive definite if $A$ is simple.
The second and the third assertion are almost apparent, by some intersection theory.
The first assertion for the special case $A = A'$, i.e. for an endomorphism group $End(A)$, can be found for instance in \cite{CAV}, \S 5.1, p. 117; and the general case can be reduced to this special one by embedding the homomorphism group into a suitable endomorphism group in such a way that the function defined above for homomorphisms can be recovered, up to some scalar factor, as the restriction of the analogous function defined for endomorphisms. 

\subsection{Homomorphisms in a hyperellipsoid}
Assume now that $A$ is a simple Abelian variety.
Define 
$$\Phi(t) := \# \left\{ f \in Hom(A,A') \mbox{ s.t. } \dfrac{\Theta^{n-1}}{(n-1)!} \cdot f^\ast \Theta' \leq t^2 \right\}.$$  
Let $H$ be the rank of the homomorphism group $Hom(A,A')$.

\begin{proposition} 
\label{ellipsoid}
There is an asymptotic estimate
$$\Phi(t) = O(t^{H}).$$
\end{proposition}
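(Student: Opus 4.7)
The plan is to interpret the counting problem as a lattice-point count inside a growing ellipsoid, and then invoke the classical asymptotic for such counts (the result the paper announces it will use in \S \ref{lattice points}).

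First I would set up the geometry. The group $\mathrm{Hom}(A,A')$ is a free abelian group of rank $H$, so it embeds as a full lattice $\Lambda$ inside the real vector space $V := \mathrm{Hom}(A,A') \otimes_{\mathbb{Z}} \mathbb{R}$, which has dimension $H$. Since $A$ is simple, the quadratic form $q(f) := \frac{\Theta^{n-1}}{(n-1)!} \cdot f^\ast \Theta'$ is, as noted just before the statement, positive definite on $\Lambda$; it therefore extends by $\mathbb{R}$-bilinearity to a positive definite quadratic form on $V$. The region
$$E_t := \{ v \in V : q(v) \leq t^2 \}$$
is a compact ellipsoid in $V$, symmetric about the origin and convex, and by homogeneity of $q$ of degree $2$ its $H$-dimensional Lebesgue volume equals $\mathrm{vol}(E_1) \cdot t^H$.

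Next, the counting function in question is exactly $\Phi(t) = \#(\Lambda \cap E_t)$. This is the prototypical situation to which the classical theorem on lattice points in a bounded region applies: for a fixed full lattice $\Lambda$ in $\mathbb{R}^H$ and a fixed bounded convex body $K$ symmetric about the origin, one has
$$\#(\Lambda \cap tK) = \frac{\mathrm{vol}(K)}{\mathrm{covol}(\Lambda)}\, t^H + O(t^{H-1})$$
as $t \to \infty$. Applied with $K = E_1$ (so that $tK = E_t$), this yields $\Phi(t) = O(t^H)$, which is precisely the asserted estimate. (In fact it gives the sharper statement that $\Phi(t)/t^H$ tends to a positive limit.)

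The main obstacle is essentially the bookkeeping at the start: one must be sure that $q$ is genuinely positive definite on $\mathrm{Hom}(A,A')$ so that $E_t$ is compact. This is where the simplicity hypothesis on $A$ is essential, because any nonzero $f \in \mathrm{Hom}(A,A')$ is then an isogeny onto its image and hence pulls back an ample class to something having strictly positive top self-intersection with $\Theta^{n-1}$. Once positive definiteness is in hand, the argument is reduced to a direct citation of the lattice-point counting theorem to be stated in \S \ref{lattice points}.
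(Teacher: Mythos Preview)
Your proposal is correct and follows essentially the same route as the paper: identify $\mathrm{Hom}(A,A')$ with a rank-$H$ lattice, extend the positive definite quadratic form to the ambient real space so that the level set $\{q \leq t^2\}$ is a compact ellipsoid scaling as $t \cdot E_1$, and then cite the classical lattice-point estimate from \S\ref{lattice points}. The only difference is cosmetic (you keep the abstract lattice $\Lambda \subset V$ rather than fixing coordinates $\mathbb{Z}^H \subset \mathbb{R}^H$), and your extra remark justifying positive definiteness via simplicity of $A$ is a welcome elaboration of what the paper merely asserts.
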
 

\begin{proof} 
Choose an isomorphism $Hom(A,A') \cong \mathbb Z^H$. The quadratic form on $Hom(A,A')$ corresponds to a quadratic form $Q: \mathbb Z^H \rightarrow \mathbb Z$, and this is naturally extended to $\widehat Q: \mathbb R^H \rightarrow \mathbb R$. As the original form is positive definite, then $Q$ is positive definite and so is $\widehat Q$ too. And so we have
$$\Phi(t) = \# \{ x \in \mathbb Z^H \mbox{ s.t. } Q(x) \leq t^2 \}.$$
Now observe that
$\{ x \in \mathbb Z^H \mbox{ s.t. } Q(x) \leq t^2 \}$
can be written as
$$t \cdot \{ y \in \mathbb R^H \mbox{ s.t. } 
\widehat Q(y) \leq 1 \} 
\cap\, \mathbb Z^H$$ 
and that
$\{ y \in \mathbb R^H \mbox{ s.t. } 
\widehat Q(y) \leq 1 \}$, 
a hyperellipsoid, is compact and convex.
So we are lead to a classical topic in Number Theory, the problem of estimating the number of lattice points belonging to a bounded region in the Euclidean space.
Thus the conclusion follows from the general result quoted below.
\end{proof}

\subsection{Lattice points in a bounded region}
\label{lattice points}
Let $K$ be a subset of $\mathbb R^n$, compact convex and with Lipschitz frontier. The function $\# (tK \cap \mathbb Z^n)$ counts the number of points of the lattice $\mathbb Z^n$ which lie in a deformed region $tK$. There is an asymptotic estimate in the form
$\# (tK \cap \mathbb Z^n) = {\rm Vol}(K)\, t^n + O(t^{n-1})$ (see \cite{M}, Ch. 6, Lemma 2, p. 165),
that we only use in the coarse version
$$\# (tK \cap \mathbb Z^n) = O(t^{n}).$$

\section{Subvarieties from homomorphisms}

Let $A$ and $A'$ be Abelian varieties, and consider the product $A \times A'$ with the natural projections $p,p'$. Let $n$ be the dimension of $A$. If $S$ is an Abelian subvariety in $A \times A'$, we denote by $p^{}_S, p'_S$ the two projections restricted to $S$. 

\subsection{Parametrization}
\label{parametrization1}
We describe the Abelian subvarieties $S$ such that the projection
$p_S: S \rightarrow A$ is surjective and finite, i.e. an isogeny. Define
$$\AS_{A \times A' / \,p} := 
\Big\{ S \in \AS_{A \times A'}
\mbox{ s.t. } p{}_S: S \rightarrow A \mbox{ is an isogeny}
\Big\}.$$

So assume that $p_S: S \rightarrow A$ is an isogeny.
Take the inverse, or dual, isogeny $\widehat {p_S} : A \rightarrow S$ and
define the homomorphism $f:A \rightarrow A'$ as $f := {p'_S} \circ \widehat {p^{}_S}$.
Let $a := \deg(p_S)$. Then $\deg(\widehat {p_S}) = a^{2n-1}$ 
(follows from $p_S \circ \widehat {p_S} = a_A$).
If $\widehat {p_S}$ is viewed as a map $A \rightarrow A \times A'$, 
it is given by $x \mapsto (ax,f(x))$. So we can describe
$S$ as the image of some homomorphism 
$A \rightarrow A \times A'$ of the form $(a,f)$.

For every integer $a > 0$ and every homomorphism $f: A \rightarrow A'$
we denote by $C_{a,f}$ the image of the homomorphism 
$A \rightarrow A \times A'$ given by the pair $(a,f)$, and denote by the same symbol $(a,f)$ the induced homomorphism $A \rightarrow C_{a,f}$.
The condition that $$\deg(a,f) = a^{2n-1}$$ is necessary and sufficient for
the projection $C_{a,f} \rightarrow A$ to be of degree $a$, and
in this case the inverse isogeny $A \rightarrow C_{a,f} \hookrightarrow A \times A'$ is naturally identified to $(a,f)$. 
So we have a correspondence
$$\Bigg\{ \mbox{\begin{tabular}{c} $(a,f)
\in \mathbb Z_{>0} \times Hom(A,A')$ \\
s.t. $\deg(a,f) = a^{2n-1}$ \end{tabular}} \Bigg\}  \longrightarrow
\AS_{A \times A' / \, p}\,.$$
This is a bijective correspondence.
The surjectivity is clear from the discussion above. 
Assume that $(a,f)$ and $(a',f')$ determine the same subvariety $C_{a,f} = C_{a',f'}$. 
(Taking the degree of the first projection we already have $a=a'$.)
Taking the inverse of the first projection we have $(a,f) = (a',f')$.

\subsection{Degree}
\label{degree1}
Assume that on $A \times A'$ is given a split polarization $L = {p}^\ast\Theta +{p'}{}^\ast\Theta'$. 
For a subvariety $S = C_{a,f}$ 
we now study the degree $S \cdot L^n = (L|_S)^n$, for which we give some estimate from below. 

Pulling back to $A$, by means of $\widehat{p _ S} : A \rightarrow S$, we obtain $\widehat{p _ S}^\ast(L|_S)^n = a^{2n-1} (L|_S)^n$. On the other hand, since $\widehat{p _ S}^\ast(L|_S) = a^{\ast}\Theta + f^{\ast}\Theta'$, we have $\widehat{p _ S}^\ast(L|_S)^n = (a^{2}\Theta + f^{\ast}\Theta')^n$. Here we replaced $a^\ast \Theta $ with $a^2\Theta$, in the computation of the intersection number, because $a^{\ast}(\Theta) \equiv \frac{a^{2}+a}{2}\, \Theta + \frac{a^{2}-a}{2}\, (-1)^\ast \Theta$ in $Pic(A)$ (see \cite{CAV}, Proposition 2.3.5, p. 33) and because $\Theta \equiv (-1)^\ast \Theta$ in $H_{2n-2}(A,\mathbb Z)$. It follows that
$$a^{2n-1} (L|_S)^n = (a^{2}\Theta + f^{\ast}\Theta')^n = \sum_{i=0}^{n} \binom{n}{i} a^{2i}\, (\Theta^i \cdot f^\ast {\Theta'}^{n-i}).$$
 \medskip
Furthermore, due to the ampleness of $\Theta$ and $\Theta'$, one has 
$$\Theta^i \cdot f^\ast {\Theta'}^{n-i} > 0$$ for $0 \leq i \leq n$ (see \cite{CAV}, Lemma 4.3.2, p. 76).
As a consequence, if we drop from the right hand side all terms except the two with $i = n, n-1$, and then divide by $n!$, we obtain the inequality 
$$\chi(L|_S) = \frac{(L|_S)^n}{n!} \geq a\; \dfrac{\Theta^n}{n!} + \dfrac{1}{a}\; \dfrac{\Theta^{n-1}}{(n-1)!} \cdot f^\ast \Theta'.$$

\subsection{On subvarieties of bounded degree $-$ I}
\label{nec}

Assume now that $A$ is a simple Abelian variety.
We study the set
$$\AS_{A \times A' / \,p} := 
\Big\{ S \in \AS_{A \times A'}
\mbox{ s.t. } p{}_S : S \rightarrow A \mbox{ is an isogeny}
\Big\}$$
and give an asymptotic estimate for the counting function
$$N_{A \times A' / \,p}(t) := 
\# \Big\{ S \in \AS_{A \times A' / \,p} \mbox{ s.t. } \chi(L|_S) \leq t \Big\}.$$
The estimate is going to be in terms of the quantity 
$$H := \mbox{the rank of the group } Hom(A,A').$$

\begin{theorem}
\label{AA'p}
In the present setting, with $A$ simple, one has 
$$N_{A \times A' / \,p}(t) = O\left(t^{H+1}\right).$$
\end{theorem}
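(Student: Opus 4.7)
The plan is to combine the parametrization of \S\ref{parametrization1} with the degree estimate of \S\ref{degree1} and the hyperellipsoid count of Proposition \ref{ellipsoid}. By the parametrization, an element $S \in \AS_{A \times A'/\,p}$ corresponds to a pair $(a,f) \in \mathbb Z_{>0} \times Hom(A,A')$ (subject to the degree condition, which we only need as an upper bound on multiplicity, so we may drop it). By the inequality at the end of \S\ref{degree1},
$$\chi(L|_S) \;\geq\; a\,\chi(\Theta) \;+\; \frac{1}{a}\cdot\frac{\Theta^{n-1}}{(n-1)!}\cdot f^\ast\Theta',$$
and since $A$ is simple, both summands are non-negative (the second one is the non-negative quadratic form on $Hom(A,A')$ from \S 3.1). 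Consequently the condition $\chi(L|_S)\leq t$ forces both
$$a \;\leq\; \frac{t}{\chi(\Theta)} \qquad\text{and}\qquad \frac{\Theta^{n-1}}{(n-1)!}\cdot f^\ast\Theta' \;\leq\; a\,t.$$

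Now I would count pairs $(a,f)$ satisfying these two necessary conditions, stratifying by the value of $a$. For each fixed $a$, Proposition \ref{ellipsoid} applied with the threshold $s^2 = at$ gives
$$\#\Bigl\{ f \in Hom(A,A') \ \Big|\ \tfrac{\Theta^{n-1}}{(n-1)!}\cdot f^\ast\Theta' \leq at\Bigr\} \;=\; \Phi\bigl(\sqrt{at}\,\bigr) \;=\; O\bigl((at)^{H/2}\bigr),$$
where the implied constant is uniform in $a$ (it comes from the volume of a fixed hyperellipsoid in $\mathbb R^H$). Summing over the at most $O(t)$ admissible values of $a$,
$$N_{A\times A'/\,p}(t) \;=\; O\!\left(\sum_{a=1}^{O(t)} (at)^{H/2}\right) \;=\; O\!\left( t^{H/2}\sum_{a=1}^{O(t)} a^{H/2}\right) \;=\; O\!\left( t^{H/2}\cdot t^{H/2+1}\right) \;=\; O\bigl(t^{H+1}\bigr),$$
which is the desired estimate.

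I do not expect any real obstacle: the hard work is already packaged in Proposition \ref{ellipsoid} (the lattice-point count in a hyperellipsoid) and in the lower bound for $\chi(L|_S)$ coming from the ampleness input $\Theta^i\cdot f^\ast{\Theta'}^{n-i}>0$ in \S\ref{degree1}. The only minor point to watch is that the estimate of Proposition \ref{ellipsoid} has to be applied with a threshold depending on $a$, which is why the exponent comes out to $H+1$ rather than $H$: the extra factor of $t$ is precisely the contribution of summing over the integer parameter $a$.
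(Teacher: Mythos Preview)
Your proof is correct and follows essentially the same approach as the paper: parametrize by $(a,f)$, extract from the degree inequality of \S\ref{degree1} the two necessary conditions $a \leq t/\chi(\Theta)$ and $\tfrac{\Theta^{n-1}}{(n-1)!}\cdot f^\ast\Theta' \leq at$, and invoke Proposition~\ref{ellipsoid}. The only cosmetic difference is that the paper replaces the threshold $at$ by the uniform bound $t^2/\chi(\Theta)$ (using $a \leq t/\chi(\Theta)$) so as to decouple the constraints into a product and obtain $O(t)\cdot O(t^H)$ directly, whereas you stratify by $a$ and sum $\sum_{a\leq O(t)} (at)^{H/2}$; both routes yield $O(t^{H+1})$.
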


\begin{proof}
We apply the description in \S \ref{parametrization1}.
The set $\AS_{A \times A' / \,p}$ is bijective to the set of parameter data 
$$\Big\{ (a,f) \in \mathbb Z_{>0} \times Hom(A, A') \mbox{ s.t. } \deg(a,f) = a^{2n-1} \Big\}.$$ 
Thus $N_{A \times A' / \,p}(t)$ is the number of elements of the set 
$$\Big\{ (a,f) \in \mathbb Z_{>0} \times Hom(A, A') \ \mbox{s.t.} \ \deg(a,f) = a^{2n-1} \ \mbox{and} \ 
\dfrac{\deg C(a,f)}{n!} \leq t \Big\}.$$
Then we consider the description in \S \ref{degree1}.
We have the inequality
$$\frac{\deg C_{a,f}}{n!} \geq a\; \dfrac{\Theta^n}{n!} + \dfrac{1}{a}\; \dfrac{\Theta^{n-1}}{(n-1)!} \cdot f^\ast \Theta'.$$
Hence the codnition $\frac{\deg C_{a,f}}{n!} \leq t$ implies 
$$a \leq \dfrac{t}{M} \ \mbox{ and } \
\dfrac{\Theta^{n-1}}{(n-1)!} \cdot f^\ast \Theta' \leq a\, t,$$ 
where $M := \Theta^n / n!$.
It follows that the set of parameter data above is a subset of the product
$$\Big\{ a \in \mathbb Z \mbox{ s.t. } 1 \leq a \leq \frac{t}{M} \Big\} \times 
\Big\{ f \in Hom(A,A') \ \mbox{s.t.} \ \dfrac{\Theta^{n-1}}{(n-1)!} \cdot f^\ast \Theta' \leq \frac{t^2}{M}\Big\}.$$
Since from Proposition \ref{ellipsoid} we know that the cardinality of the set on the right hand side of the product is a function of order $O(t^{H})$, 
it follows that $N_{A \times A' / \,p}(t)$ 
is bounded above by a function of order $O(t)\, O(t^{H})$
as in the statement.
\end{proof}

\section{Subvarieties of a product variety}

Keeping the notation introduced in the preceding section, we extend the description of subvarieties in the product variety $A \times A'$.

\subsection{Parametrization}
\label{parametrization2}
We describe the Abelian subvarieties $S$ of $A \times A'$ such that the projection
$p{}_S: S \rightarrow A$ is surjective. 

Define $F:= Ker_0(p_S)$ and view it as an Abelian subvariety of $A'$.
Let $T$ in $S$ be an Abelian subvariety such that
$F + T = S$ and $F \cap T$ is finite. Then the sum map
$s: F \times T \rightarrow S$ is an isogeny, and
the restricted projection $p_T: T \rightarrow A$
is an isogeny too. 

Assume that a polarization $L$ is given on $A \times A'$. 
Because of the reducibility theorem relative to a given polarization (cfr. \S \ref{RP}),
we can choose $T$ in such a way that
$s^\ast (L|_S) = {q}^\ast (L|_F) + {q'}{}^\ast (L|_{T})$
where $q$ and $q'$ denote the projections 
from $F \times T$ to $F$ and $T$, respectively.

We say that two subvarieties $F$ and $T$ in $A \times A'$ are {complementary} with respect to $L$ when the sum map $s: F \times T \rightarrow F + T$ is an isogeny and the pullback polarization $s^\ast (L|_{F + T})$ splits as ${q}^\ast (L|_F) + {q'}^\ast (L|_{T})$.

There is a correspondence 
$$\left\{ \mbox{\begin{tabular}{c} 
$(F,T) \in \AS_{A'} \times \AS_{A \times A' / \, p}$ \\  
s.t. complementary w.r.t. $L$ 
\end{tabular}} \right\} \longrightarrow
\left\{ \mbox{\begin{tabular}{c} 
$S \in \AS_{A \times A'}$  s.t.  \\  
$p_S: S \rightarrow A$ surjective
\end{tabular}} \right\}$$
$$\hspace{38pt} (F,T) \hspace{10pt} \longmapsto \hspace{10pt} F + T $$
and this is a bijective correspondence. 
That the correspondence is surjective is clear from the construction.
Given $S$, the choice of $F$ is unique, because $Ker(p_S)$ 
must be equal to $(T \cap F) + F$ and therefore $Ker_0(p_S) = F$.
That the choice of $T$ is then unique is part of the reducibility theorem.

\subsection{Degree}
\label{degree2}
Concerning the (reduced) degree of $S = F + T$ with respect to the polarization $L = p^\ast\Theta + {p'}{}^\ast\Theta'$, i.e. the Euler characteristic $\chi(L|_S)$, we here recall what we know. Define 
$$a := \deg(s: F \times T \rightarrow S) = \# (F \cap T) = \deg(p{}_T: T \rightarrow A).$$
From \S \ref{remark} we have the basic facts
$$a\, \chi(L|_S) = \chi(\Theta'|_F)\, \chi(L|_T) \ \ \ \mbox{ and } \ \ \ 
\chi(L|_T) \leq \chi(L|_S)\, \chi(\Theta'|_F) \hspace{20pt}$$
and from \S \ref{degree1} we take the inequality
$$a M \leq \chi(L|_T)$$
where $M = \chi(\Theta)$, holding because $T$ is of the form $C_{a,f}$ with $a = \deg(p{}_T)$ as above.

\subsection{On subvarieties of bounded degree $-$ II}
Assume now that $A$ is a simple Abelian variety.
\begin{theorem}
\label{AA'}
If $A$ is simple and if $N_{A'}(t) = O(t^E)$ then $N_{A \times A'}(t) = O\left(t^{E+2(H+1)}\right)$.
\end{theorem}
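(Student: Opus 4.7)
The plan is to split $\AS_{A \times A'}$ according to whether the first projection $p_S : S \to A$ is surjective. Since $A$ is simple, $p_S(S) \subseteq A$ is an Abelian subvariety of a simple variety, so it is either $A$ itself or $\{0\}$. In the latter case $S$ is contained in $\{0\} \times A' \cong A'$ and $L|_S = \Theta'|_S$, so these subvarieties are precisely the elements of $\AS_{A'}$; the number of them with $\chi(L|_S) \leq t$ is bounded by the hypothesis $N_{A'}(t) = O(t^E)$.

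For the remaining subvarieties $S$ with $p_S$ surjective I apply the parametrization of \S\ref{parametrization2}, writing $S = F + T$ uniquely with $F \in \AS_{A'}$ and $T \in \AS_{A \times A' / \, p}$ complementary with respect to $L$. Setting $a := \#(F \cap T)$, the three inequalities recorded in \S\ref{degree2}, namely
\[
a\, \chi(L|_S) = \chi(\Theta'|_F)\, \chi(L|_T), \qquad \chi(L|_T) \leq \chi(L|_S)\, \chi(\Theta'|_F), \qquad a M \leq \chi(L|_T),
\]
combine as follows: multiplying the third by $\chi(\Theta'|_F)$ and substituting the first gives $M\, \chi(\Theta'|_F) \leq \chi(L|_S)$, and feeding this back into the second gives $\chi(L|_T) \leq \chi(L|_S)^2 / M$. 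Hence the assumption $\chi(L|_S) \leq t$ implies $\chi(\Theta'|_F) \leq t/M$ and $\chi(L|_T) \leq t^2 / M$ simultaneously.

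It then remains to count the admissible pairs $(F, T)$. The number of admissible $F$ is at most $N_{A'}(t/M) = O(t^E)$ by the hypothesis on $A'$, while the number of admissible $T$ is at most $N_{A \times A'/\,p}(t^2/M) = O((t^2/M)^{H+1}) = O(t^{2(H+1)})$ by Theorem \ref{AA'p}. Since the map $(F,T) \mapsto F + T$ of \S\ref{parametrization2} is injective on complementary pairs, the count of surjective $S$ is bounded above by the product of these two quantities, namely $O(t^{E + 2(H+1)})$, and this absorbs the $O(t^E)$ from the non-surjective case to give the desired estimate. I expect no real obstacle here: once one notices that simplicity of $A$ forces the dichotomy on $p_S(S)$, the argument is a mechanical combination of the degree inequalities in \S\ref{degree2} with the two counting bounds already available.
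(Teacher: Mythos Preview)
Your argument is correct and follows essentially the same route as the paper: the dichotomy on $p_S(S)$ from simplicity of $A$, the parametrization $S=F+T$ of \S\ref{parametrization2}, the derivation of $\chi(\Theta'|_F)\le t/M$ and $\chi(L|_T)\le t^2/M$ from the three relations in \S\ref{degree2}, and the product bound via Theorem~\ref{AA'p}. The only cosmetic difference is that the paper drops the constant $M$ before stating the final inequality, whereas you carry it through.
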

\begin{proof}
Since $A$ is simple, the set $\AS_{A \times A'}$ is the disjoint union of $\AS_{A'}$
and the complementary subset which, according to \S \ref{parametrization2}, 
is bijective to the set of parameter data 
$$\Big\{ (F,T) \in \AS_{A'} \times \AS_{A / \, p}
\mbox{ \ s.t. $F$ and $T$ are complementary w.r.t. $L$} \Big\}.$$

The basic facts concerning the degree of $S = F + T$ expressed in terms of the degree $a$ of the sum isogeny $F \times T \rightarrow S$ and the degrees of $F$ and $T$ are collected in \S \ref{degree2}. In the present setting we can show that
$$\chi(L|_S) \leq t \ \ \ \mbox{ implies } \ \ \ 
\chi(\Theta'|_F) \leq t \ \ \mbox{ and } \ \ \chi(L|_T) \leq t^2.$$
We have $a\, \chi(L|_S) = \chi(\Theta'|_F) \chi(L|_T)$. So from $\chi(L|_T) \geq a M$  we obtain $\chi(\Theta'|_F) \leq t/M$ and from $\chi(L|_T) \leq \chi(L|_S) \chi(\Theta'|_F)$ we obtain $\chi(L|_T) \leq t^2/M$.
Note that in the weaker inequalities stated above we dropped the constant $M$, just because it will not affect the exponent in the estimate of the counting function $N_{A \times A'}(t)$.

Summing everything up, we have
$$N_{A \times A'}(t) \leq N_{A'}(t) +
N_{A'}(t) \, {N}_{A \times A' / \, p}(t^2).$$
From Theorem \ref{AA'p} we know that
$${N}_{A \times A'/ \, p}(t^2) = O(t^{2(H+1)}).$$
So, with the hypothesis that $N_{A'}(t) = O(t^E)$, the statement follows.
\end{proof}

\section{Subvarieties of a completely reduced product}
\label{finale}

Let $A = B^k$ be the $k$th self product of a simple Abelian variety $B$, of dimension $n$, endowed with a split polarization $L = p_1^\ast\Theta_1 + \cdots + p_k^\ast\Theta_k$, where each $\Theta_i$ is a polarization on $B$. 
The estimate for the counting function $N_{B^k}(t)$ is in terms of  
$h$, the rank of the endomorphism group $End(B)$.

\begin{theorem}
\label{B^k}
There is an asymptotic estimate
$$N_{B^k}(t) = O\left(t^{(kh+2)(k-1)}\right).$$
\end{theorem}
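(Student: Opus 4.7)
The plan is to prove Theorem \ref{B^k} by induction on $k$, viewing $B^k$ as $B \times B^{k-1}$ and applying Theorem \ref{AA'} at each step with $A = B$ (simple) and $A' = B^{k-1}$. The split polarization $L = p_1^\ast \Theta_1 + \cdots + p_k^\ast \Theta_k$ on $B^k$ manifestly splits as $p^\ast \Theta_1 + {p'}^\ast L'$, where $p, p'$ are the projections of $B \times B^{k-1}$ and $L' = q_1^\ast \Theta_2 + \cdots + q_{k-1}^\ast \Theta_k$ is again a split polarization on $B^{k-1}$. Thus the inductive hypothesis is applied to the polarized variety $(B^{k-1}, L')$, which has the same form and so remains within the scope of the theorem.

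The base case $k=1$ is immediate: since $B$ is simple, the only Abelian subvarieties of $B$ are $0$ and $B$ itself, so $N_B(t)$ is bounded, corresponding to the exponent $(h+2)(1-1) = 0$.

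For the inductive step, assume $N_{B^{k-1}}(t) = O(t^{E_{k-1}})$ with $E_{k-1} = ((k-1)h + 2)(k-2)$. Since $Hom(B, B^{k-1}) \cong End(B)^{k-1}$, its rank is $H = (k-1)h$. Theorem \ref{AA'} then gives
$$N_{B^k}(t) = O\!\left(t^{E_{k-1} + 2(H+1)}\right) = O\!\left(t^{E_k}\right), \quad \text{with} \quad E_k = E_{k-1} + 2(k-1)h + 2.$$
Solving this recurrence from $E_1 = 0$ yields
$$E_k = \sum_{j=1}^{k-1} \bigl(2jh + 2\bigr) = h\,k(k-1) + 2(k-1) = (kh+2)(k-1),$$
which is exactly the claimed exponent.

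There is no real obstacle here: the substantive work is packaged in Theorem \ref{AA'} (which in turn rests on Theorem \ref{AA'p} and Proposition \ref{ellipsoid}). The only point requiring care is checking that the inductive hypothesis genuinely applies to $(B^{k-1}, L')$, i.e., that the residual polarization on the smaller product is still split so the argument can be iterated; this is immediate from the definition of a split polarization. The remainder is the elementary computation of the rank of $Hom(B, B^{k-1})$ and the summation of a linear recurrence.
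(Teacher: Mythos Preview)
Your proof is correct and follows essentially the same approach as the paper: induction on $k$, writing $B^k = B \times B^{k-1}$ with the split polarization decomposing accordingly, applying Theorem \ref{AA'} with $H = (k-1)h$, and computing the resulting exponent. The only cosmetic difference is that the paper verifies the single identity $E_{k-1} + 2(H+1) = (kh+2)(k-1)$ directly, whereas you unwind it as a telescoping sum; both are equally valid.
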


\begin{proof}
By induction on $k$. The initial case $k=1$ being obvious, assume $k \geq 2$.
 Write $B^k = B \times B^{k-1}$ and let $p: B^k \rightarrow B$ and $p': B^k \rightarrow B^{k-1}$ be the natural projections, with $p := p_1$ and $p' := (p_2,\ldots,p_k)$. The given polarization on $B^k$ splits as $L = p^\ast \Theta + p'{}^\ast \Theta'$ where $\Theta := \Theta_1$ and where $\Theta'$ is the split polarization on $B^{k-1}$ obtained from $\Theta_2,\ldots,\Theta_k$. 

The group $Hom(B,B^{k-1})$ is of rank $H = (k-1)h$.
By the induction hypothesis we have $N_{B^{k-1}}(t) = O(t^{E})$
with $E = \left((k-1)h+2\right)(k-2)$.
From Theorem \ref{AA'}
we have $N_{B^k}(t) = (t^{E + 2(H+1)})$. In the present setting 
$E + 2(H+1) = (kh+2)(k-1)$, and the statement is proven.
\end{proof}

Let $A = B_1^{k_1} \times \cdots \times B_q^{k_q}$ be a product of simple Abelian varieties, each occurring with some multiplicity, and assume that $B_i$ and $B_j$ are not isogenous for $i \neq j$.

Each simple Abelian subvariety of $A$ is contained in a unique $B_j^{k_j}$ and is then isogenous to $B_j$.
An arbitrary Abelian subvariety of $A$ may be written as $S = S_1 \times \cdots \times S_q$ where 

\begin{tabular}{ccl}
$S_j$ & = & the sum of all simple Abelian subvarieties of $S$ contained in $B_j^{k_j}$ \\ & & (and hence belonging to the isogeny class of $B_j$) \smallskip \\ 
& = & $S \cap B_j^{k_j}\, .$ \smallskip
\end{tabular} \\
Thus we have a correspondence 
$$\AS_{B_1^{k_1}} \times \cdots \times \AS_{B_q^{k_q}} \longrightarrow \AS_{A}$$
$$(S_1,\ldots,S_q) \longmapsto S_1 \times \ldots \times S_q$$
and clearly this is a bijective correspondence.

Let $L$ be a split polarization on $A$. It may be written as $L = p_1^\ast(\Theta_1) + \cdots + p_q^\ast(\Theta_q)$ where each $\Theta_j$ is a split polarization on $B_j^{k_j}$. If $S = S_1 \times \ldots \times S_q$ is an Abelian subvariety of $A$, as above, then 
(the K\"unneth formula again)
$$\chi(L|_S) = \chi(\Theta_1|_{S_1}) \cdots \chi(\Theta_q|_{S_q}).$$

In the present setting, the estimate for the function $N_A(t)$, and its proof, involve the following data: 

\begin{center}
\begin{tabular}{cl}
$h_j$ & the rank of the endomorphism group $End(B_j)$, \smallskip \\
$h$ & the maximum among the ranks $h_1,\ldots,h_q$, \smallskip \\
$k$ & the maximum among the multiplicities $k_1,\ldots,k_q$. \smallskip
\end{tabular} 
\end{center}

\begin{theorem}
\label{B1Bq}
There is an asymptotic estimate
$$N_{B_1^{k_1} \times \cdots \times B_q^{k_q}}(t) = O \left(t^{\,q (kh+2)(k-1)} \right).$$
\end{theorem}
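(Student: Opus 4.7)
The plan is to reduce the asymptotic count directly to the isotypic case settled in Theorem \ref{B^k}, using the product decomposition of subvarieties and the multiplicativity of the Euler characteristic recorded just before the statement.

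First I would invoke the bijective correspondence
$$\AS_{B_1^{k_1}} \times \cdots \times \AS_{B_q^{k_q}} \longrightarrow \AS_A, \qquad (S_1,\ldots,S_q) \longmapsto S_1 \times \cdots \times S_q,$$
together with the factorization $\chi(L|_S) = \chi(\Theta_1|_{S_1}) \cdots \chi(\Theta_q|_{S_q})$. Both are consequences of the hypothesis that the $B_j$ are pairwise non-isogenous and that $L$ is split, as already observed in the text preceding the theorem.

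Next I would verify the key decoupling step: each factor $\chi(\Theta_j|_{S_j})$ is a positive integer. When $S_j = 0$ this equals $1$; when $\dim S_j > 0$ the class $\Theta_j|_{S_j}$ is ample on the Abelian subvariety $S_j$, so Riemann--Roch gives
$$\chi(\Theta_j|_{S_j}) = \frac{(\Theta_j|_{S_j})^{\dim S_j}}{(\dim S_j)!} \geq 1.$$
Hence the constraint $\chi(L|_S) \leq t$ on the product forces $\chi(\Theta_j|_{S_j}) \leq t$ simultaneously for every $j$, and the counting inequality
$$N_A(t) \leq \prod_{j=1}^q N_{B_j^{k_j}}(t)$$
is immediate from the bijection above.

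To finish, I would apply Theorem \ref{B^k} to each isotypic factor, which gives $N_{B_j^{k_j}}(t) = O\left(t^{(k_j h_j + 2)(k_j-1)}\right)$, and then uniformize the exponents by means of the inequalities $k_j \leq k$ and $h_j \leq h$ to obtain $N_{B_j^{k_j}}(t) = O\left(t^{(kh+2)(k-1)}\right)$ for every $j$. Multiplying the $q$ estimates yields the stated bound. The only nonroutine point in the whole argument is the positivity $\chi(\Theta_j|_{S_j}) \geq 1$, which is precisely what converts a multiplicative constraint on the product into individual constraints on the factors; everything else is bookkeeping with the already-established isotypic estimate.
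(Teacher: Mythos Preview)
Your proof is correct and follows essentially the same route as the paper: both use the product decomposition of subvarieties, the multiplicativity of $\chi$, the resulting inequality $N_A(t) \leq \prod_j N_{B_j^{k_j}}(t)$, and then Theorem \ref{B^k} together with the uniform bounds $k_j \leq k$, $h_j \leq h$. If anything, you are slightly more explicit than the paper in justifying the decoupling step via $\chi(\Theta_j|_{S_j}) \geq 1$.
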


\begin{proof}
We have seen that
the set $\AS_{A}$ is bijective to $\AS_{B_1^{k_1}} \times \cdots \times \AS_{B_q^{k_q}}$ and from the expression for $\chi(L|_S)$ above it follows that
$N_A(t) \leq N_{B_1^{k_1}}(t) \cdots N_{B_q^{k_q}}(t)$.
From Theorem \ref{B^k} we have 
$N_{B_j^{k_j}}(t) = O\left(t^{(k_jh_j+2)(k_j-1)}\right)$ and therefore
$N_{B_1^{k_1}}(t) \cdots N_{B_q^{k_q}}(t) = O \left(t^{(k_1h_1+2)(k_1-1) + \cdots + (k_qh_q+2)(k_q-1)} \right)$,
and so the statement follows.
\end{proof}

According to Remark \ref{R} the proof of Theorem \ref{T} is now complete.

\vfill \noindent
email: {\tt lucioguerra56@gmail.com}

\end{document}